\def\cvd{~\vbox{\hrule\hbox{%
     \vrule height1.3ex\hskip0.8ex\vrule}\hrule } }
\newtheorem{remark}[theorem]{Remark}
\def\fix{\mbox{\rm fix}}
\def\supp{\mbox{\rm supp}}
\def\Sym{\mbox{\rm Sym}}
\def\Alt{\mbox{\rm Alt}}
\def\N{\mathbb{N}}
\title{Bochert's results on the minimal degree of multiply transitive permutation groups}
\author{Bernd Schomburg\thanks{P.O. Box 1644, 53734 Sankt Augustin, Germany (bernd.schomburg@me.com).}}
\begin{document}

\maketitle
\begin{abstract}
We give a modern account of Alfred Bochert's results from 1892 on the minimal degree of doubly, triply and quadruply transitive permutation groups. 
\end{abstract}

\begin{keywords}
Multiply transitive permutation groups, minimal degree.
\end{keywords}
\begin{AMS}
20B20. 
\end{AMS}

\section{Introduction}
Let $\Omega$ be a finite set and $G \le \Sym(\Omega)$ be a permutation group\footnote{We use the notation as in \cite{dixon-1996}. For disjoint set unions we will use the $+$ and $\sum$ symbols.} on $\Omega$. The minimal degree $m$ of $G$ is the least degree of its non-identity elements, i.e. $m= \min \{|\supp(u)| \mid u \in G, u \ne 1\}$.
In 1892, A. Bochert published lower bounds for the minimal degree of non-trivial doubly, triply and quadruply transitive permutation groups \cite{bochert-1892b}, see also \cite[pp. 52-54]{seguier-1912}.  Although his method of proof was broadly acknowledged as outstanding (see e.g. \cite[p. 42]{wielandt-1964} and \cite[p. 314]{babai-1987}), the proofs have never been properly accounted for in modern publications. The purpose of this paper is to fill this gap.

Since 1892 research in this field has taken various directions. 
In 1897, in his last publication at all \cite{bochert-1897b}, Bochert improved the bound for non-trivial doubly transitive groups from $m \ge n/4$ (where $n = |\Omega|$) to $m \ge n/3 - \kappa \sqrt{n}$ with a suitable $\kappa > 0$.  Between 1914  and 1933 \cite{manning-1914}--\cite{manning-1933}, W.A. Manning proved lower bounds for minimal degrees and minimal $p$-degrees\footnote{For a prime divisor $p$ of the group order, the minimal $p$-degree is the smallest number of points moved by a non-identity $p$-element of the group.} for higher degrees of transitivity. Nowadays, these classical results are seen in the light of the classification of finite simple groups which implies
 that there are no non-trivial $t$-transitive groups for $t \ge 6$ and only four non-trivial 4-transitive groups \cite[p. 110]{cameron-1999} and that
 the minimal degree of a non-trivial doubly transitive group is always $\ge n/3$ \cite{hoechsmann-1999}. This explains why, with regard to minimal degrees, interest has shifted away from multiply transitive to  (simply transitive) primitive and semiprimitive permutation groups, cf.  \cite{herzog-1976}, \cite{liebeck-1984},
 \cite{liebeck-1991}, \cite{guralnik-1998} and \cite{morgan-2020}.

\section{Some combinatorial lemmas} The basic strategy already deployed by C. Jordan and his contemporaries to achieve results on minimal degrees is to start with a group element of minimal degree and to use commutators to construct new non-trivial elements related to the original one in a controlled way. We recall that if $u,v \in \Sym(\Omega)$ and $\Delta = \supp(u) \cap \supp(v)$ then
\begin{equation}
 \supp([u,v]) \subseteq \Delta \cup \Delta^u \cup \Delta^v,
\end{equation}
where $[u,v] = uvu^{-1}v^{-1}$. Since $ \Delta \cup \Delta^u \cup \Delta^v$ can be written as the disjoint union
$\Delta + (\Delta^u \setminus \Delta \cap \Delta^u)
 + (\Delta^v \setminus \Delta \cap \Delta^v)
$,
we have 
\begin{equation}
 |\supp([u,v])| \leq 3 |\Delta| - |\Delta \cap \Delta^u| - |\Delta \cap \Delta^v|.   
\end{equation}
We will also need that
\begin{equation}
\supp([u,v]) \subseteq 
 \Delta\cup \{ \alpha \in \fix(u) \mid  \alpha^v \in \Delta\} \cup \{\alpha \in \fix(v) \mid\alpha^u \in \Delta\}.  
\end{equation}

Indeed, if $\alpha \in \supp([u,v])$ then $\alpha$ cannot be both in $\fix(u)$ and $\fix(v)$. Assume that $\alpha \not\in \Delta$ and that $\alpha \in \fix(u)$ so that $\alpha^v \ne \alpha$. If $\alpha^v$ were fixed by $u$, then
$$
\alpha^{[u,v]} = \alpha^{uvu^{-1}v^{-1}} = \alpha^{vu^{-1}v^{-1}} = \alpha^{vu u^{-1}v^{-1}} = \alpha,
$$
contradicting the assumption that $\alpha \in \supp([u,v])$. Hence $\alpha^v \in \Delta$. Analogously, 
$\alpha \not\in \Delta$ and $\alpha \in \fix(v)$ imply that $\alpha^u \in \Delta$.

\begin{lemma} Let $u,v \in \Sym(\Omega)$, $\Phi\subseteq \fix([u,v]) \cap \supp(u)$ and
$\Psi \subseteq \supp(v u v^{-1})\cap \supp(u)$. Then
$$
|\supp([u,v])| \le 2|\supp(u)| - |\Phi| - |\Psi|.
$$
\end{lemma}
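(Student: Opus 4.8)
The plan is to sidestep the symmetric three-set estimate behind (2) and instead read the support of $[u,v]$ off the factorization $[u,v]=u\cdot(vu^{-1}v^{-1})$, whose second factor is a conjugate of $u^{-1}$ and hence has an easily controlled support. First I would record the elementary fact that $\supp(ab)\subseteq\supp(a)\cup\supp(b)$ for any $a,b\in\Sym(\Omega)$, since a point fixed by both $a$ and $b$ is fixed by $ab$. Applying this to the factorization gives $\supp([u,v])\subseteq\supp(u)\cup\supp(vu^{-1}v^{-1})$. Because $vu^{-1}v^{-1}=(vuv^{-1})^{-1}$ and a permutation shares its support with its inverse, the second set is $\supp(vuv^{-1})$; and since conjugation preserves cycle type, $|\supp(vuv^{-1})|=|\supp(u)|$. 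Writing $A=\supp(u)$ and $B=\supp(vuv^{-1})$, I thus obtain $\supp([u,v])\subseteq A\cup B$ with $|A|=|B|=|\supp(u)|$.

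Next I would feed in the two prescribed sets. The hypothesis $\Phi\subseteq\fix([u,v])$ means $\Phi$ is disjoint from $\supp([u,v])$, while $\Phi\subseteq\supp(u)=A\subseteq A\cup B$; combining these sharpens the containment to $\supp([u,v])\subseteq(A\cup B)\setminus\Phi$. Counting exactly,
$$|\supp([u,v])|\le|(A\cup B)\setminus\Phi|=|A|+|B|-|A\cap B|-|\Phi|=2|\supp(u)|-|A\cap B|-|\Phi|.$$
Finally, the hypothesis $\Psi\subseteq\supp(vuv^{-1})\cap\supp(u)=A\cap B$ gives $|A\cap B|\ge|\Psi|$, and substituting this lower bound into the displayed identity produces the asserted inequality.

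The only point that could look delicate is that $\Phi$ and $\Psi$ are not assumed disjoint, so naively subtracting both might seem to over-count. This worry dissolves once one keeps $|(A\cup B)\setminus\Phi|$ as an exact cardinality and only estimates $|A\cap B|$ from below at the very last step: a point lying in both $\Phi$ and $\Psi$ genuinely belongs to $A\cap B$ and is genuinely deleted by $\Phi$, so each of the two reductions is independently valid. I therefore expect no serious obstacle---the whole substance of the lemma is the decision to factor $[u,v]=u\cdot(vu^{-1}v^{-1})$ and bound the two resulting supports, rather than to invoke the three-set bound (2).
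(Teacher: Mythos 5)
Your proof is correct and is essentially the paper's own argument: the paper likewise factors the commutator as a product of $u^{\pm1}$ with the conjugate $vu^{\mp1}v^{-1}$ (it takes $x=vuv^{-1}$, $y=u^{-1}$, whose product is $[u,v]^{-1}$), applies $\supp(xy)\subseteq\supp(x)\cup\supp(y)$, and then deducts $|\Phi|$ and $|\Psi|$ from the two supports. The only difference is bookkeeping --- you keep $|(A\cup B)\setminus\Phi|$ exact via inclusion--exclusion and bound $|A\cap B|\ge|\Psi|$ at the end, while the paper covers $\supp(xy)$ by $(\supp(x)\setminus\Phi)\cup(\supp(y)\setminus\Psi)$ and adds the two cardinalities --- which is immaterial.
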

\begin{proof} For $x,y \in \Sym(\Omega)$ we have
$
\supp(xy) \subseteq \supp(x) \cup \supp(y)
$.
If now $\Phi \subseteq \fix(xy) \cap \supp(x)$ and $\Psi \subseteq \supp(x) \cap \supp(y)$,
then 
$$
\supp(xy) \subseteq (\supp(x) \setminus \Phi) \cup (\supp(y) \setminus \supp(x))
\subseteq (\supp(x) \setminus \Phi) \cup (\supp(y) \setminus \Psi),
$$
thus $
|\supp(xy)| \subseteq |\supp(x)| + |\supp(y)| -  |\Phi| -|\Psi|
$.
By taking $x = v u v^{-1}$ and $y = u^{-1}$ the assertion follows.
\end{proof}

The proof of the following theorem illustrates the above-mentioned general strategy for
obtaining bounds for minimal degrees.

\begin{theorem} (Jordan) Let $G$ be $t$-transitive, $t \ge 2$. If its minimal degree $m$ is greater than $3$, then
$m \ge 2t-2$.
\end{theorem}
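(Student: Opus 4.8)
The plan is to run the commutator strategy on a single element of minimal degree. First I would fix $u \in G$ with $|\supp(u)| = m$ and set $\Delta = \supp(u)$; since $m > 3$ we have $m \ge 4$, and as a $3$-cycle of $\Alt(\Omega)$ would realise $m = 3$, the hypothesis $m>3$ already rules out $G \ge \Alt(\Omega)$. For $t \in \{2,3\}$ the bound reads $2t-2 \le 4 \le m$ and is immediate, so I would assume $t \ge 4$ and argue by contradiction, supposing $m \le 2t-3$. The goal is then to manufacture a conjugate $v = u^g$ with $[u,v] \ne 1$ but $|\supp([u,v])| < m$, contradicting the minimality of $m$.

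The construction of $v$ is where $t$-transitivity enters. Write $\Sigma = \supp(v) = \Delta^g$ and $I = \Delta \cap \Sigma$. I would use $t$-transitivity to prescribe the images under $g$ of $t$ of the $m$ points of $\Delta$: one point is sent into $\Delta$ so as to place a chosen $b = a^u$ into $I$ while keeping the witness $a$ out of $\Sigma$, and the remaining controlled points are pushed onto fresh points of $\Omega \setminus \Delta$ (there is room for this unless $n$ is so small that $G \ge \Alt(\Omega)$, a case already excluded). Only the $m-t$ uncontrolled points of $\Delta$ can then re-enter $\Delta$, so the overlap satisfies $|I| \le (m-t)+1$. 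The configuration $a \in \Delta \setminus \Sigma$ with $a^u = b \in \Sigma$ forces $a^{uv} = b^v \ne b = a^{vu}$, hence $[u,v] \ne 1$.

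With $v$ in hand I would apply the preceding lemma, taking $\Psi = \Delta \setminus \Sigma$ and $\Phi$ equal to the set of points of $\Delta$ fixed by $[u,v]$. Since a point of $\Delta \setminus \Sigma$ is fixed by $v$ and lies in $\supp(vuv^{-1}) \cap \supp(u)$, we get $|\Psi| \ge m - |I|$. Restricting the inclusion $\supp([u,v]) \subseteq I \cup \{\alpha \in \fix(u) \mid \alpha^v \in I\} \cup \{\alpha \in \fix(v) \mid \alpha^u \in I\}$ to $\Delta$ shows that a point of $\Delta \setminus I$ can be moved by $[u,v]$ only if it is a $u$-preimage of $I$, of which there are few, so $|\Phi|$ is also close to $m - |I|$. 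The lemma then gives $|\supp([u,v])| \le 2m - |\Phi| - |\Psi|$, and the overlap bound $|I| \le m-t+1$ drives this below $m$ exactly in the range $m \le 2t-3$. It is crucial that the lemma carries the coefficient $2$ rather than the $3$ of the cruder estimate, for it is this $2$ that turns ``control of $t$ points'' into a factor of $2t$.

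I expect the main obstacle to be the delicate bookkeeping that pins the constant to \emph{exactly} $2t-2$. One must simultaneously force $I$ as small as $t$-transitivity permits, retain at least one $u$-preimage of $I$ in $\Delta \setminus \Sigma$ so that $[u,v] \ne 1$, and yet keep the number of such preimages small enough that $\Phi$ stays large, all while the conditions on $g$ (placing $b$ in $\Sigma$) and on $g^{-1}$ (keeping $a$ out of $\Sigma$) compete for the same budget of $t$ prescribable points. Handling the cycle structure of $u$ — in particular short cycles, where the witness points $a$, $a^u$, $a^{u^2}$ may collapse — together with the finitely many small-degree configurations that survive the exclusion of $\Alt(\Omega)$, will demand a careful separate case analysis.
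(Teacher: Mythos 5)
There is a genuine gap, and it is exactly at the point you deferred to ``delicate bookkeeping'': the bookkeeping cannot be made to work with your construction, because your controlled points only count \emph{once} in Lemma 2.1, not twice. Concretely: you take $\Psi=\Delta\setminus\Sigma$ (fine, each such point is $v$-fixed and lies in $\supp(vuv^{-1})\cap\supp(u)$, so $|\Psi|=m-|I|\ge t-1$) and $\Phi=\fix([u,v])\cap\Delta$. But a point $\delta\in\Delta\setminus\Sigma$ is guaranteed to lie in $\Phi$ only if its image $\delta^u$ also avoids $\Sigma$; if $\delta^u\in I$, then $\delta$ may well be moved by $[u,v]$. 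Since you have no control over where $I$ sits inside $\Delta$ (its location is dictated by the $m-t$ uncontrolled images of $g$), in the worst case every point of $I$ has its $u$-preimage in $\Delta\setminus\Sigma$, and all you can assert is $|\Phi|\ge m-2|I|$, not $|\Phi|\approx m-|I|$. Lemma 2.1 then gives $|\supp([u,v])|\le 2m-|\Phi|-|\Psi|\le 2m-(m-2|I|)-(m-|I|)=3|I|\le 3(m-t+1)$, and combining with $m\le|\supp([u,v])|$ yields only $m\ge(3t-3)/2$. This is strictly weaker than $2t-2$ for every $t\ge 3$: at $t=4$, for instance, your argument cannot exclude $m=5$, whereas the theorem asserts $m\ge 6$. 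There is also a second, unresolved defect which you yourself flag: $t$-transitivity lets you prescribe images under $g$, but ``keep $a$ out of $\Sigma$'' is a condition on preimages, and an uncontrolled point of $\Delta$ may be mapped onto $a$ by $g$; so even $[u,v]\ne 1$ is not secured by your construction.

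The missing idea --- and the heart of the paper's proof --- is to make the controlled set \emph{$u$-invariant} and to have $v$ \emph{fix it pointwise}, rather than to push points out of $\Delta$. The paper first reduces (by the $3$-cycle argument, using $m>3$) to $m>t$, then replaces $u$ by a power of prime order $p$, and takes $\Phi$ to be a union of $N$ $p$-cycles of $u$ (plus, if $p\nmid t-1$, a partial cycle segment), with $|\Phi|$ essentially $t-1$. By $t$-transitivity one finds $v$ fixing these $t-1$ points and moving one further point $\alpha$ of $\supp(u)$ so that $[u,v]\ne 1$ is automatic. The point of $u$-invariance is that if $\phi^v=\phi$ for all $\phi\in\Phi$ and $\Phi^u=\Phi$, then every $\phi\in\Phi$ satisfies $\phi^{[u,v]}=\phi$ \emph{and} $\phi\in\supp(vuv^{-1})\cap\supp(u)$; each of the $t-1$ points is counted twice in Lemma 2.1, giving $m\le|\supp([u,v])|\le 2m-2(t-1)$ and hence $m\ge 2t-2$. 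In your setup the two roles ($\Phi$ and $\Psi$) are filled by sets that overlap only partially and degrade each other through the uncontrolled overlap $I$; in the paper's setup one single $u$-invariant set fills both roles completely, which is precisely what ``turns control of $t$ points into a factor of $2t$.''
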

\begin{proof} (Cf. \cite[pp. 46--47]{seguier-1912}) Take $u \in G$ such that $|\supp(u)| =m$.  Then $m >t$; for otherwise let $\alpha \in \supp(u)$, $\beta \in \fix(u)$ and $v \in G$ such that $\alpha^v = \beta$ and $\gamma^v = \gamma$ for all $\gamma \in \supp(u) \setminus \{\alpha\}$ (using that $G$ is $t$-transitive). Then the $3$-cycle $(\alpha ~\beta ~\alpha^u) =[u,v]$ lies in $G$, contrary to hypothesis.
Next we can assume w.l.o.g. that $u$ has prime order $p$. Write $t-1 = N p +r $ with $N \in \N$ and $r \in \{0,...,p-1\}$. Let $\Phi$ be the union of (the supports of)  $N$ $p$-cycles of $u$ so that
$ \Phi \subset \supp(u)$, $|\Phi| =t-1 -r$ and $\Phi^u = \Phi$. Let $\alpha \in \supp(u) \setminus \Phi$.
We distinguish two cases:\\
{\bf Case 1.} $r=0$.  Choose $\beta \in \fix(u)$. Since $G$ is $t$-transitive, there is a $v \in G$ such that 
$\phi^v = \phi$ for all $\phi \in \Phi$ and $\alpha^v = \beta$. Since $\alpha^{[u,v]} \ne \alpha$, $[u,v] \ne 1$ and $m \le |\supp([u,v])|$. Furthermore,
$\Phi \subseteq \fix([u,v]) \cap \supp(u)\cap \supp(vuv^{-1})$, hence $|\supp([u,v])| \le 2m - 2t+2$ by Lemma 2.1. Along with $m \le |\supp([u,v])|$ we get $m \ge 2t-2$.\\
{\bf Case 2.} $r \ne 0$. Put $\Psi = \Phi + \{\alpha_{-k} \mid k=1,...,r\}$, $\alpha_{k} = \alpha^{u^{k}}$.  Since $G$ is $t$-transitive, there is a $v \in G$ such that 
$\phi^v = \phi$ for all $\phi \in \Psi$ and $\alpha^v = \alpha^u$. Since $\alpha_{-1}^{vu} \ne \alpha_{-1}^{uv}$, $[u,v] \ne 1$ and thus $m \le |\supp([u,v])|$.
Moreover
$$ 
   \Psi \setminus \{\alpha_{-1}\} \subseteq \fix([v,u]) \cap \supp(u), ~~\Psi + \{\alpha\} \subseteq \supp(vuv^{-1}) \cap \supp(u),
$$
thus, again by Lemma 2.1,
$m \le |\supp([v,u])| \le 2m -  |\Psi \setminus \{\alpha_{-1}\}|  - |\Psi + \{\alpha\}| = 2m - 2t +2$ and
$m \ge 2t-2$.
\end{proof}

We continue with the following observation:
 
 \begin{lemma} Let $G$ act transitively on two finite sets $\Omega$ and $\Omega'$ and
 let ${\cal E} \subseteq \Omega \times \Omega'$ be invariant with respect to the product action of $G$ on  $\Omega \times \Omega'$.
 Then there are $M,M' \in \N$ such that (i) $|\{\beta \in \Omega'\mid (\alpha, \beta) \in {\cal E}\}| = M $ for all $ \alpha \in \Omega$, (ii) $ |\{\alpha \in \Omega \mid (\alpha, \beta) \in {\cal E}\}|  =  M'$ for all $\beta \in \Omega'$ and (iii)
 $M |\Omega|  =   M' |\Omega'|$.
 \end{lemma}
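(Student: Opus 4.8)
The plan is to prove this by a standard double-counting argument applied to the invariant relation $\mathcal{E}$, with the transitivity of the $G$-action supplying the regularity (constancy of the row and column counts). First I would define, for each $\alpha \in \Omega$, the row count $r(\alpha) = |\{\beta \in \Omega' \mid (\alpha,\beta) \in \mathcal{E}\}|$, and for each $\beta \in \Omega'$ the column count $c(\beta) = |\{\alpha \in \Omega \mid (\alpha,\beta) \in \mathcal{E}\}|$. Parts (i) and (ii) assert that $r$ and $c$ are constant; part (iii) is then the counting identity relating these constants to $|\Omega|$ and $|\Omega'|$.

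For part (i), the key observation is that the product action of $G$ permutes $\mathcal{E}$ among itself, so for any $g \in G$ the map $\beta \mapsto \beta^g$ is a bijection from $\{\beta \mid (\alpha,\beta)\in\mathcal{E}\}$ onto $\{\beta \mid (\alpha^g,\beta)\in\mathcal{E}\}$; indeed $(\alpha,\beta)\in\mathcal{E}$ iff $(\alpha^g,\beta^g)\in\mathcal{E}$ by invariance, and $g$ acts as a bijection on $\Omega'$. Hence $r(\alpha) = r(\alpha^g)$ for all $g$. Since $G$ acts transitively on $\Omega$, every point of $\Omega$ is of the form $\alpha^g$ for a fixed $\alpha$ and varying $g$, so $r$ is constant; call this common value $M$. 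The argument for part (ii) is entirely symmetric, using transitivity on $\Omega'$ and the bijection $\alpha \mapsto \alpha^g$, yielding a constant column count $M'$.

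For part (iii), I would count the cardinality of $\mathcal{E}$ in two ways. Summing the row counts gives $|\mathcal{E}| = \sum_{\alpha \in \Omega} r(\alpha) = M|\Omega|$, while summing the column counts gives $|\mathcal{E}| = \sum_{\beta \in \Omega'} c(\beta) = M'|\Omega'|$. Equating the two expressions yields $M|\Omega| = M'|\Omega'|$, as required.

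None of the three steps presents a serious obstacle; the argument is a textbook double count. The only point that needs care is making the invariance bookkeeping precise: one must verify that invariance of $\mathcal{E}$ under the product action really does give the biconditional $(\alpha,\beta)\in\mathcal{E} \Leftrightarrow (\alpha^g,\beta^g)\in\mathcal{E}$ (using that $g^{-1}\in G$ as well), so that $g$ genuinely restricts to a bijection between the relevant fibers. Once that is in hand, transitivity does the rest and the counting identity is immediate.
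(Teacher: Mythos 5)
Your proof is correct and is essentially identical to the paper's: both establish (i) and (ii) by using transitivity and invariance to produce the bijection $\gamma \mapsto \gamma^g$ between fibers, and both obtain (iii) by counting $|{\cal E}|$ once over rows and once over columns. No substantive difference in approach.
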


{\it Proof.} Let $\alpha,\bar{\alpha} \in \Omega$. Then there is a $g \in G$ such that $\alpha^g = \bar{\alpha}$ and
$\{\beta \mid (\alpha, \beta) \in {\cal E}\} \ni \gamma \mapsto \gamma^g \in \{\beta \mid (\bar{\alpha}, \beta) \in {\cal E}\}
$ 
is well-defined and bijective, which proves (i). (ii) follows similarly.
Finally,
$$
M' |\Omega'| = \sum_{\beta \in \Omega'} |\{\alpha \mid (\alpha, \beta) \in {\cal E}\}|= |{\cal E}| =
\sum_{\alpha \in \Omega} |\{\beta  \mid (\alpha,\beta) \in {\cal E}\}|
= M |\Omega|.\cvd
$$

As a consequence we note:
 \begin{lemma} Let $G$ be a finite group, $u \in G$, $H \le G$ and $E= \{g^{-1}u g \mid g \in H\}$ so that $H$ acts transitively on $E$ by conjugation. If $H$ acts transitively on a set $\Gamma$ and if ${\cal E} \subseteq E \times \Gamma$ is invariant with respect to the product action of $H$ on $E \times \Gamma$, then there is an $M \in \N$ such that  $
|\{\gamma \in \Gamma \mid (x,\gamma) \in {\cal E}\}| = M
$
for all $x \in E$ and
$$
|\{x \in E \mid (x,\gamma) \in {\cal E} \}| = |E| \frac{M}{|\Gamma|}
$$
for all $\gamma \in \Gamma$. \cvd
\end{lemma}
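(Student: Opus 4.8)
The plan is to obtain this statement as an immediate specialization of Lemma 2.3. Concretely, I would apply Lemma 2.3 with the group $H$ in the role of $G$, the set $E$ in the role of $\Omega$, and $\Gamma$ in the role of $\Omega'$, so that the two transitive actions are the conjugation action of $H$ on $E$ and the given action of $H$ on $\Gamma$.

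First I would verify that the three hypotheses of Lemma 2.3 are met. The action of $H$ on $E$ is conjugation, $x^g = g^{-1} x g$, which in the exponential (right-action) convention is a genuine action, since $x^{gh} = (gh)^{-1} x (gh) = h^{-1}(g^{-1}xg)h = (x^g)^h$. Its orbit through $u$ is exactly $\{u^g \mid g \in H\} = E$, so this action is transitive on $E$; this is precisely the parenthetical remark recorded in the statement. Transitivity of $H$ on $\Gamma$ and invariance of ${\cal E}$ under the product action of $H$ on $E \times \Gamma$ are assumed outright.

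With these checked, Lemma 2.3 supplies $M, M' \in \N$ such that $|\{\gamma \in \Gamma \mid (x,\gamma) \in {\cal E}\}| = M$ for every $x \in E$, such that $|\{x \in E \mid (x,\gamma) \in {\cal E}\}| = M'$ for every $\gamma \in \Gamma$, and such that the balance relation $M|E| = M'|\Gamma|$ holds. The first of these is exactly the first claim of Lemma 2.4. Solving the balance relation for $M'$ gives $M' = |E|\, M / |\Gamma|$, which is precisely the second displayed formula.

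Since everything reduces to citing Lemma 2.3, I expect no substantial obstacle. The only point requiring a moment's care is the bookkeeping that conjugation really is an action in the sense used by Lemma 2.3 and that its single orbit is $E$; once that is noted, the conclusion is a direct transcription with the dictionary $\Omega \leftrightarrow E$, $\Omega' \leftrightarrow \Gamma$.
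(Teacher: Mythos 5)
Your proposal is correct and matches the paper exactly: the paper presents Lemma 2.4 as an immediate consequence of Lemma 2.3 (with $H$, $E$, $\Gamma$ in the roles of $G$, $\Omega$, $\Omega'$), leaving the proof to the reader, and your derivation --- citing transitivity of conjugation on $E$ and solving $M|E| = M'|\Gamma|$ for $M'$ --- is precisely that omitted argument.
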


The following application of Lemma 2.4 is key for our proofs in Section 3.

\begin{lemma}  Let $G \le \Sym(\Omega)$ be $t$-transitive, $t \geq 2$, $u \in G$ 
, $\Delta \subseteq  \supp(u)$ and
$
    E = \{g^{-1} u g \mid g \in G_{(\Delta)}\}
$. Then, with $m = |\supp(u)|$:\\\\
(i) If $|\Delta| \le t-1$, then for $\gamma \in \Omega \setminus \Delta$
\begin{eqnarray}
 |\{x \in E \mid \gamma^x = \gamma\}| 
 & = & 
|E|  \frac{n -m}{n-|\Delta|},    \\
|\{x \in E \mid \gamma^x \ne \gamma\}| 
& = &  |E| \frac{m-|\Delta|}{n-|\Delta|}.
\end{eqnarray}
(ii) If $|\Delta| \le t-2$, then for all 
$\gamma, \delta \in \Omega \setminus \Delta$
such that $\gamma \ne \delta$
\begin{equation}
|\{x \in E \mid \gamma \in \fix(x), \delta \in \supp(x)\}| = |E| \frac{(n-m)(m-|\Delta|)}{(n-|\Delta|)(n- |\Delta|-1)}.
\end{equation}
(iii) If $|\Delta|=1$, then for all $\gamma \in \Omega \setminus \Delta$
\begin{equation}
|\{x \in E \mid \gamma^x \in \Delta \} |  = |E| \frac{1}{n-1}.    
\end{equation}
(iv) If $|\Delta|=1$ and $t \ge 3$, then for all 
$\gamma, \delta \in \Omega \setminus \Delta$
such that $\gamma \ne \delta$
\begin{equation}
  |\{x\in E \mid \gamma^x = \delta\}| = |E|\frac{m-2}{(n-1)(n-2)} .
\end{equation}
\end{lemma}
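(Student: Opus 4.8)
The plan is to derive all four statements as direct applications of Lemma 2.4 with $H = G_{(\Delta)}$, which acts transitively on $E$ by conjugation by construction. Two uniform facts about the elements of $E$ drive everything. First, each $x \in E$ has the form $x = g^{-1}ug$ with $g \in G_{(\Delta)}$, so $\supp(x) = \supp(u)^g$ has cardinality $m$; and since $g$ fixes $\Delta$ pointwise while $\Delta \subseteq \supp(u)$, we get $\Delta = \Delta^g \subseteq \supp(u)^g = \supp(x)$. Hence for every $x \in E$ we have $\Delta \subseteq \supp(x)$, so $\fix(x) \subseteq \Omega \setminus \Delta$ with $|\fix(x)| = n-m$, and $|\supp(x) \setminus \Delta| = m - |\Delta|$. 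Second, $t$-transitivity of $G$ implies $G_{(\Delta)}$ acts $(t-|\Delta|)$-transitively on $\Omega \setminus \Delta$; in particular it is transitive on $\Omega \setminus \Delta$ when $|\Delta| \le t-1$, and transitive on the set of ordered pairs of distinct points of $\Omega \setminus \Delta$ when $|\Delta| \le t-2$.

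For parts (i) and (iii) I would take $\Gamma = \Omega \setminus \Delta$ with the natural action of $H$ (which preserves $\Omega \setminus \Delta$ since $H$ fixes $\Delta$), so $|\Gamma| = n - |\Delta|$ and $H$ is transitive on $\Gamma$ because $|\Delta| \le t-1$. For (i), set ${\cal E} = \{(x,\gamma) \in E \times \Gamma \mid \gamma^x = \gamma\}$; the identity $(\gamma^g)^{g^{-1}xg} = (\gamma^x)^g$ shows ${\cal E}$ is invariant under the product action, and for a fixed $x$ the number of admissible $\gamma$ equals $|\fix(x)| = n-m$, since $\fix(x) \subseteq \Omega \setminus \Delta$. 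Lemma 2.4 then yields (4), and (5) follows by taking complements in $E$. For (iii), set ${\cal E} = \{(x,\gamma) \mid \gamma^x \in \Delta\}$; invariance again uses that $g$ fixes $\Delta$. Because $x$ is a bijection and $\Delta = \{\delta_0\}$ is a single point lying in $\supp(x)$, its unique preimage $\delta_0^{x^{-1}}$ differs from $\delta_0$ and hence lies in $\Omega \setminus \Delta$, so the row count is $M = 1$, and Lemma 2.4 gives (7).

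For parts (ii) and (iv) I would instead take $\Gamma$ to be the set of ordered pairs $(\gamma,\delta)$ of distinct points of $\Omega \setminus \Delta$, so that $|\Gamma| = (n-|\Delta|)(n-|\Delta|-1)$ and $H$ is transitive on $\Gamma$ under the hypotheses $|\Delta| \le t-2$ (part (ii)), resp. $|\Delta|=1$ and $t \ge 3$ (part (iv)). For (ii), let ${\cal E} = \{(x,(\gamma,\delta)) \mid \gamma \in \fix(x),\ \delta \in \supp(x)\}$; for fixed $x$ there are $n-m$ choices of $\gamma$ in $\fix(x)$ and $m-|\Delta|$ choices of $\delta$ in $\supp(x) \setminus \Delta$, and these are automatically distinct (a fixed point cannot lie in the support), so $M = (n-m)(m-|\Delta|)$ and Lemma 2.4 gives (6). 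For (iv), writing $\Delta = \{\delta_0\}$, let ${\cal E} = \{(x,(\gamma,\delta)) \mid \gamma^x = \delta\}$; for fixed $x$ the admissible $\gamma$ are those in $\supp(x) \setminus \Delta$ (ensuring $\delta = \gamma^x \ne \gamma$ and $\gamma \notin \Delta$) whose image avoids $\Delta$. Removing the single point $\delta_0^{x^{-1}} \in \supp(x) \setminus \Delta$ whose image lies in $\Delta$ leaves $M = (m-1) - 1 = m-2$, whence (8).

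The computations of the row counts $M$ and of $|\Gamma|$ are elementary once the two uniform facts above are in place. The step I expect to require the most care is precisely verifying these facts: that $\Delta \subseteq \supp(x)$ for \emph{every} $x \in E$ (so that the row counts are genuinely independent of $x$, as Lemma 2.4 requires, and all fixed points of $x$ lie in $\Omega \setminus \Delta$), and that $G_{(\Delta)}$ has the stated degree of transitivity on $\Omega \setminus \Delta$ under each hypothesis on $|\Delta|$. The invariance of each ${\cal E}$ under the product action is then a one-line conjugation identity.
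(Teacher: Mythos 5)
Your proposal is correct and follows essentially the same route as the paper: the same choices of $\Gamma$ (namely $\Omega\setminus\Delta$ for (i), (iii) and the ordered pairs of distinct points for (ii), (iv)), the same invariant sets ${\cal E}$, the same row counts $M$, and the same appeal to Lemma 2.4, with (2.5) obtained from (2.4) by complementation in $E$. The only difference is that you spell out details the paper leaves implicit (the verification that $\Delta\subseteq\supp(x)$ for all $x\in E$, the transitivity of $G_{(\Delta)}$ on $\Gamma$, and the conjugation identity giving invariance), which is fine.
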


{\it Proof.} (i) Let $\Gamma =\Omega \setminus \Delta$ and ${\cal E} = \{ (x, \gamma) \in E \times \Gamma \mid \gamma^x = \gamma\}$. Since $|\Delta| \le t-1$, $G_{(\Delta)}$ is transitive on $\Gamma$. Moreover, ${\cal E}$ is invariant w.r.t. $G_{(\Delta)}$.
Now $\Delta \subseteq \supp(x)$ for all $x \in E$ so that $\{\gamma \in \Gamma \mid \gamma^x = \gamma\} = \fix(x)$ and $|\{\gamma \in \Gamma \mid \gamma^x = \gamma\}| = n-m$
for all $x \in E$. (2.4) follows from Lemma 2.4, as in turn (2.4) implies (2.5).\\\\
The other cases are proved along the same line. We will provide appropriate sets $\Gamma$ and ${\cal E} \subseteq E \times \Gamma$ such that ${\cal E}$ is invariant w.r.t. $G_{(\Delta)}$ and, by the conditions on $t$ and $|\Delta|$, $G_{(\Delta)}$ is transitive on $\Gamma$. We will then determine $M = |\{\gamma \in \Gamma \mid (x,\gamma) \in{\cal E} \}|$ and the respective assertion will follow from Lemma 2.4.\\\\
(ii) $\Gamma = (\Omega \setminus \Delta)^{[2]}$, ${\cal E} = 
\{ (x, (\gamma,\delta)) \in E \times \Gamma \mid \gamma \in \fix(x), \delta \in \supp(x)\}
$. For all $x \in E$,
$$
M = |\fix(x) \times (\supp(x) \setminus \Delta)| = (n-m)(m-|\Delta|).
$$
\\
(iii) $\Gamma = \Omega \setminus \Delta$,
${\cal E} = \{ (x, \gamma) \in E \times \Gamma \mid \gamma^x \in \Delta\}$.
$M= |\{\gamma \in \Gamma \mid \gamma^x \in  \Delta \}| = 1$ for all $x \in E$.\\\\
(iv)  $\Gamma = (\Omega \setminus \Delta)^{[2]}$, 
$
{\cal E} = \{ (x, (\gamma,\delta)) \in E \times \Gamma \mid \gamma^x = \delta\}
$.
For all $x \in E$,
$$
M  = |\{ \gamma \in \Omega \setminus \Delta \mid  \gamma \in \supp(x), \gamma^x \not\in \Delta\}| 
 = |\supp(x) \setminus (\Delta \cup \Delta^{x^{-1}})\}| = m-2. \cvd
$$

\section{Bochert's theorems} We are now in the position to prove Bochert's estimates. Let $\Omega$ be a finite set with $n$ elements and $G \le \Sym(\Omega)$ be $t$-transitive, $t \ge 2$. We assume the $G$ does not contain $\Alt(\Omega)$, so that its minimal degree $m$ is $ >3$, see \cite[Theorem 3.3A]{dixon-1996}.    We put 
$
\Delta_x = \supp(x) \cap \supp(u)
$ 
for $x \in G$.

\begin{theorem} If $t \ge 2$, then $m \ge n/4$ for $n \ge 38$.
\end{theorem}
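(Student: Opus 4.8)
The plan is to assume for contradiction that $m<n/4$ and to manufacture a non-identity element of support strictly smaller than $m$, namely a commutator $[u,x]$ with $x$ a conjugate of a fixed minimal-degree element. As in the proof of Jordan's theorem I first take $u\in G$ with $|\supp(u)|=m$ and, passing to a suitable power, assume $u$ has prime order (this will organise $\supp(u)$ into equal cycles and help control the refined estimates below). I fix $\gamma_0\in\supp(u)$ and set $\delta_0=\gamma_0^{u}\in\supp(u)$, and I run Lemma 2.5 with $\Delta=\emptyset$, so that $E=\{g^{-1}ug\mid g\in G\}$ is the full conjugacy class of $u$; the transitivity hypotheses of Lemma 2.5(i),(ii) are available because $G$ is $2$-transitive.

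The first ingredient is a large family of conjugates whose commutator with $u$ is automatically non-trivial. Put $A=\{x\in E\mid \gamma_0\in\fix(x),\ \delta_0\in\supp(x)\}$. For $x\in A$ we have $\gamma_0^{\,xu}=\gamma_0^{\,u}=\delta_0$ while $\gamma_0^{\,ux}=\delta_0^{\,x}\ne\delta_0$, hence $ux\ne xu$ and $[u,x]\ne 1$; consequently $|\supp([u,x])|\ge m$ for every $x\in A$. By Lemma 2.5(ii), $|A|=|E|\,(n-m)m/\bigl(n(n-1)\bigr)$.

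The second ingredient is that a typical $x\in A$ meets $\supp(u)$ only slightly. Writing $\Delta_x=\supp(x)\cap\supp(u)$ and summing over $\gamma\in\supp(u)$, I split $\sum_{x\in A}|\Delta_x|$ according to $\gamma=\delta_0$ (contributing $|A|$), $\gamma=\gamma_0$ (contributing $0$), and the remaining $m-2$ points, each of which contributes at most the number of $x\in E$ with $\gamma$ and $\delta_0$ both in $\supp(x)$, namely $|E|\,m(m-1)/\bigl(n(n-1)\bigr)$, again read off from Lemma 2.5. Dividing by $|A|$ yields the average bound
$$
\frac{1}{|A|}\sum_{x\in A}|\Delta_x|\ \le\ 1+\frac{(m-1)(m-2)}{n-m},
$$
so some $x\in A$ has $|\Delta_x|$ at most this value. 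For that $x$ the three-support estimate $|\supp([u,x])|\le 3|\Delta_x|-|\Delta_x\cap\Delta_x^{u}|-|\Delta_x\cap\Delta_x^{x}|$ controls the support of the non-trivial element $[u,x]$, and together with the standing assumption $m<n/4$ (equivalently $n-m>3m$) this should push $|\supp([u,x])|$ below $m$, the desired contradiction.

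The hard part is strictness. The crude form $|\supp([u,x])|\le 3|\Delta_x|$ combined with the average bound only gives $|\supp([u,x])|\le m$, and it degenerates exactly at the threshold $m=n/4$: the stray additive $1$ in the average and the divisibility of $m$ by $3$ obstruct the last step. To close the gap I expect to keep the subtracted terms $|\Delta_x\cap\Delta_x^{u}|$ and $|\Delta_x\cap\Delta_x^{x}|$, whose sums over $E$ are themselves evaluated by Lemma 2.5 (each equal to $|E|\,m^{2}(m-1)/\bigl(n(n-1)\bigr)$), and to dispose of the finitely many small values $m\le 9$ separately, since a minimal-degree element of such small support in a $2$-transitive group of large degree already forces $G\ge\Alt(\Omega)$. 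It is this boundary bookkeeping, reconciling the averaging range $m\ge 10$ with the small-$m$ cases, that produces the explicit cutoff $n\ge 38$.
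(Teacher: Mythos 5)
Your construction and first two counting steps are sound, and in fact dual to the paper's: the paper conjugates $u$ by $G_\alpha$ and keeps those conjugates fixing $\beta=\alpha^u$, while you conjugate by all of $G$ and keep the set $A$ of conjugates fixing $\gamma_0$ and moving $\delta_0=\gamma_0^u$; both selections force $[u,x]\ne 1$, and your average bound $\frac{1}{|A|}\sum_{x\in A}|\Delta_x|\le 1+(m-1)(m-2)/(n-m)$ is correct (the count of $x$ with two prescribed points in $\supp(x)$ really follows from Lemma 2.4 rather than being literally in Lemma 2.5, but that is harmless). The genuine gap is the endgame, and neither of your proposed patches closes it. First, the subtracted terms: knowing $\sum_{x\in E}|\Delta_x\cap\Delta_x^u|$ and $\sum_{x\in E}|\Delta_x\cap\Delta_x^x|$ over the whole class $E$ gives no lower bound for these quantities on the subset $A$, let alone for the single $x$ your averaging produces --- all of that mass may sit on $E\setminus A$. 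Nor can the restricted sums over $A$ be evaluated with Lemma 2.4/2.5: the incidence conditions then involve four points ($\gamma_0$, $\delta_0$, $\gamma$, $\gamma^{u^{-1}}$), and mere $2$-transitivity does not make such counts constant over configurations. Second, the disposal of $m\le 9$: the claim that a $2$-transitive group of degree $n\ge 38$ and minimal degree $m\le 9$ must contain $\Alt(\Omega)$ is not available in this framework; the only such fact the paper invokes (Dixon--Mortimer, Theorem 3.3A) covers $m\le 3$, and elementary Jordan/Babai-type bounds like $m\ge(\sqrt n-1)/2$ give only $n\le 361$ for $m=9$. What you are appealing to is, in effect, the theorem being proved (or the classification, which defeats the purpose).

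The irony is that you already hold a complete proof: drop the contradiction framing. For every $x\in A$ you have $[u,x]\ne 1$, hence $m\le|\supp([u,x])|\le 3|\Delta_x|$, i.e.\ $|\Delta_x|\ge m/3$; feeding this into your average bound gives, unconditionally,
$$
\frac{m}{3}\le 1+\frac{(m-1)(m-2)}{n-m},\qquad\text{i.e.}\qquad n\le m+\frac{3(m-1)(m-2)}{m-3}=4m+\frac{6}{m-3}.
$$
This single inequality finishes the theorem exactly as in the paper: for $4\le m\le 9$ its right-hand side is at most $37$, so $n\ge 38$ forces $m\ge 10$, whence $n\le 4m+6/7$ and, $n$ being an integer, $n\le 4m$, i.e.\ $m\ge n/4$. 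The boundary and divisibility difficulties you worried about disappear once the inequality is used directly instead of being pushed to a strict contradiction; the paper's proof is precisely this computation carried out with its own (equivalent) family $F$, and it likewise never needs the subtracted intersection terms. Also, your initial reduction to $u$ of prime order is unnecessary --- nothing in the counting uses it.
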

\begin{proof} Let $u \in G$ be an element of minimal degree, i.e. $|\supp(u)| =m$, $\alpha \in \supp(u)$ and $\beta =\alpha^u$. Put $E= \{g^{-1}ug \mid g \in G_\alpha\}$, 
$F 
= \{x \in E \mid \beta^x = \beta\}
$ and $
{\cal F} = \{ (x, \gamma) \mid x \in F, \gamma \in \Delta_x \}
$. By (2.4),
\begin{equation}
 |F| = |E| \frac{n-m}{n-1}.
\end{equation}
Moreover,  
$xu \ne ux$ for all $x  \in F$, so that, by (2.1), $
|\Delta_x| \ge m/3$ for all $x \in F$ and thus  $|{\cal F}| \ge |F| m/3$.   
Now
${\cal F}$
can be written in the form
\begin{eqnarray*}
 {\cal F} & = &  F \times \{\alpha\} + \sum_{\gamma \in \Lambda}  \{  x \in F \mid \gamma \in \supp(x) \} \times \{\gamma\},
\end{eqnarray*}
where $\Lambda =\supp(u) \setminus \{\alpha, \beta\}$. By (2.5),
\begin{eqnarray*}
 |{\cal F}|  & = & |F| + \sum_{\gamma \in \Lambda} |\{  x \in F \mid \gamma \in \supp(x) \}| \\
 & \le & |F| + \sum_{\gamma \in \Lambda} |\{  x \in E \mid \gamma \in \supp(x) \}| \\
 & = & |F| + (m-2) |E| \frac{m-1}{n-1}.
\end{eqnarray*}
Hence, by (3.1),
$$
(m-2) |E| \frac{m-1}{n-1} \ge |{\cal F}| - |F| \ge \left(\frac{m}{3} -1\right) |E| \frac{n-m}{n-1}.
$$
This can be simplified to
$$
 n \le  3 \frac{(m-1)(m-2)}{m-3} +m = 4m + \frac{6}{m-3}.
$$
For $n \ge 38$ we must have $m \ge 10$ which shows that $n \le 4m + 6/7 $, i.e. $m \ge n/4$.
\end{proof}

\begin{theorem} If $t \ge 3$, then $m \ge n/3$ for $n \ge 23$.
\end{theorem}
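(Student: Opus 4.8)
The plan is to follow the template of Theorem 3.1 verbatim in its opening moves and to exploit the stronger hypothesis $t \ge 3$ at exactly the two places where the earlier argument was wasteful. I would again take $u$ of minimal degree, fix $\alpha \in \supp(u)$, set $\beta = \alpha^u$, and form $E = \{g^{-1} u g \mid g \in G_\alpha\}$ (so $\Delta = \{\alpha\}$, $|\Delta| = 1$) together with $F = \{x \in E \mid \beta^x = \beta\}$. As before, (2.4) gives $|F| = |E|\frac{n-m}{n-1}$, every $x \in F$ satisfies $[u,x] \ne 1$, and I would study the incidence set ${\cal F} = \{(x,\gamma) \mid x \in F,\ \gamma \in \Delta_x\}$, so that $|{\cal F}| = \sum_{x \in F} |\Delta_x|$.

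The first improvement is in the upper bound for $|{\cal F}|$. In Theorem 3.1 the contribution of a point $\gamma \in \supp(u) \setminus \{\alpha,\beta\}$ was only estimated by $|\{x \in E \mid \gamma \in \supp(x)\}|$, discarding the defining property of $F$. Here I would instead use that every $x \in F$ fixes $\beta$, so that $|\{x \in F \mid \gamma \in \supp(x)\}|$ is \emph{exactly} the count of Lemma 2.5(ii) (legitimate since $|\Delta| = 1 \le t-2$), namely $|E|\frac{(n-m)(m-1)}{(n-1)(n-2)}$. Summing over the $m-2$ such $\gamma$ and adding the $\gamma = \alpha$ column (which contributes $|F|$) gives an exact, and strictly smaller, value for $|{\cal F}|$.

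The second, and harder, improvement is in the lower bound for $|{\cal F}|$; the crude $|\Delta_x| \ge m/3$ only delivers $n \le 3m + O(1)$, which does not suffice to conclude $m \ge n/3$. I would therefore retain the subtracted terms in (2.2), writing $|\Delta_x| \ge \tfrac13\big(m + |\Delta_x \cap \Delta_x^u| + |\Delta_x \cap \Delta_x^x|\big)$ for $x \in F$, and seek a positive lower bound for $S := \sum_{x \in F}|\Delta_x \cap \Delta_x^x|$. Since $|\Delta_x \cap \Delta_x^x|$ counts ordered pairs $(\delta,\gamma)$ in $\supp(u)$ with $\delta^x = \gamma$, I would keep only the pairs with $\delta = \alpha$ or $\gamma = \alpha$. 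Each such term, e.g. $|\{x \in F \mid \alpha^x = \gamma\}|$, fixes $\beta$ and prescribes only the image (or preimage) of the single point $\alpha$; this is a configuration on two points of $\Omega \setminus \{\alpha\}$, so the $2$-transitivity of $G_\alpha$ (guaranteed by $t \ge 3$) makes Lemma 2.4 applicable and yields the exact value $|E|\frac{n-m}{(n-1)(n-2)}$. Summing the $2(m-2)$ retained terms gives $S \ge 2(m-2)|E|\frac{n-m}{(n-1)(n-2)}$.

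Combining the two bounds and dividing through by $|F| = |E|\frac{n-m}{n-1}$, the common factors cancel and I expect the inequality to collapse to $n \le 3m + \frac{4}{m-3}$. Since $m > 3$, a short check of $m = 4,5,6,7$ shows each value forces $n < 23$, so $n \ge 23$ gives $m \ge 8$, whence $\frac{4}{m-3} < 1$ and $n \le 3m$, i.e. $m \ge n/3$. The main obstacle I anticipate is precisely the lower bound for $S$: the \emph{full} sum $\sum_{x \in F}|\Delta_x \cap \Delta_x^x|$ cannot be evaluated under mere triple transitivity, since a generic pair $\delta^x = \gamma$ with $\delta,\gamma \notin \{\alpha,\beta\}$ together with $\beta \in \fix(x)$ involves three free points and would require $4$-transitivity. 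The real work is to recognise that discarding those incomputable pairs and keeping only the $\alpha$-incident ones still leaves a lower bound strong enough—by the slender margin $\frac{4}{m-3}$—to upgrade the constant from $4$ to $3$.
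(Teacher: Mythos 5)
Your proposal is correct, and it reaches the paper's key inequality $n \le 3m + \frac{4}{m-3}$ by a genuinely different construction. The paper does not filter the Theorem~3.1 family: it first replaces $u$ by a conjugate $v = huh^{-1}$ (with $h \in G_\alpha$, $\beta^h = \alpha^u$) so that $v$ carries $\alpha$ to a point $\beta \in \fix(u)$, and then takes $E = \{g^{-1}vg \mid g \in G_{\alpha\beta}\}$; thus the extra point condition is built into $E$ identically rather than imposed as your filter $F = \{x \in E \mid \beta^x = \beta\}$ with $\beta = \alpha^u \in \supp(u)$ (note the dual roles: in the paper every $x$ \emph{moves} $\alpha$ onto a $u$-\emph{fixed} point; in yours every $x$ \emph{fixes} a $u$-\emph{moved} point). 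The two proofs also retain dual correction terms from (2.2): the paper keeps $\sum_x |\Delta_x \cap \Delta_x^u|$ and bounds it below via the two points $\alpha^{u^{\pm 1}}$, while you keep $\sum_x |\Delta_x \cap \Delta_x^x|$ and bound it below via the pairs through $\alpha$. I verified your two exact counts: $|\{x \in F \mid \gamma \in \supp(x)\}| = |E|\frac{(n-m)(m-1)}{(n-1)(n-2)}$ is indeed Lemma~2.5(ii) with $\Delta = \{\alpha\}$; and $|\{x \in E \mid \beta^x=\beta,\ \alpha^x = \gamma\}| = |E|\frac{n-m}{(n-1)(n-2)}$ is \emph{not} one of the four cases of Lemma~2.5, but your fresh application of Lemma~2.4 with $\Gamma = (\Omega\setminus\{\alpha\})^{[2]}$ and ${\cal E} = \{(x,(\beta',\gamma')) \mid \beta'\in\fix(x),\ \alpha^x=\gamma'\}$ is valid: invariance under $G_\alpha$ is immediate, transitivity of $G_\alpha$ on $\Gamma$ is exactly $3$-transitivity of $G$, and $M = |\fix(x)| = n-m$ since $\fix(x)$ avoids both $\alpha$ and $\alpha^x$. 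With $|F| = |E|\frac{n-m}{n-1}$, the combination $m|F| + S \le 3|{\cal F}|$ does collapse, after dividing by $|E|\frac{n-m}{n-1}$ (permissible as $m < n$; if $m = n$ the theorem is trivial), to $(m-3)(n-2) \le (m-2)(3m-5)$, i.e.\ $n \le 3m + \frac{4}{m-3}$, exactly as in the paper, and your endgame ($n \ge 23 \Rightarrow m \ge 8 \Rightarrow n \le 3m$) matches the paper's. What each route buys: the paper's choice of $E$ keeps every count inside the pre-packaged Lemma~2.5 (with $\Delta = \{\alpha,\beta\}$, $|\Delta| = 2 \le t-1$), so no new invariance arguments are needed; your route is conceptually tighter in that it exhibits Theorem~3.2 as Theorem~3.1 with the two wasteful steps repaired, making visible precisely where the hypothesis $t \ge 3$ enters. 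One small imprecision worth fixing in a write-up: $|\Delta_x \cap \Delta_x^x|$ counts ordered pairs of \emph{distinct} points of $\supp(u)$ interchanged by $x$ (pairs $\delta = \gamma$ would correspond to fixed points of $x$, which lie outside $\Delta_x$); this does not affect your bound, since the $\alpha$-incident pairs you keep are automatically distinct-point pairs.
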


\begin{proof} Let $u \in G$ of degree $m$, $\alpha \in \supp(u)$, $\beta \in \fix(u)$. Since $G$ is $2$-transitive, there is an $h \in G_\alpha$  such that $\beta^h = \alpha^u$, and $v = h u h^{-1}$ maps $\alpha$ to $\beta$. Put $
E = \{g^{-1}v g \mid g \in G_{\alpha\beta}\}$.
Then  $xu \ne ux$ for all $x \in E$ and
$$
{\cal E} = \{(x,\gamma) \mid x \in E, \gamma \in \supp([u,x]) \}
$$
has at least $|E| m$ elements. 
We consider
\begin{eqnarray*}
{\cal F} & = & \{ (x, \gamma) \mid x \in E, \gamma \in \Delta_x \}, \\
{\cal G} & = & \{ (x, \gamma) \mid x \in E, \gamma \in \Delta_x  \cap \Delta_x^u \} = \{(x, \gamma) \mid x \in E, \{\gamma, \gamma^{u^{-1}}\} \subseteq  \Delta_x \},
\end{eqnarray*}
so that by (2.2)
$
|{\cal E}| \le 3 |{\cal F}| -| {\cal G}|
$. 
Now 
$$
{\cal F} =  E \times \{\alpha\} + \sum_{\gamma \in \Lambda}  \{  x \in E \mid \gamma \in \supp(x) \} \times \{\gamma\},
$$
with $\Lambda =\supp(u) \setminus \{\alpha\}$, and by (2.5),
\begin{equation}
|{\cal F}| = |E| + (m-1) \frac{|E|(m-2)}{n-2} = |E| \left(1 + \frac{(m-1)(m-2)}{n-2} \right).
\end{equation}
Next note that
\begin{eqnarray*}
{\cal G} & = &  \{(x, \gamma) \mid x \in E, \{\gamma, \gamma^{u^{-1}}\} \subseteq \Delta_x \}\\
& = & \{(x, \gamma) \mid x \in E, \gamma \in \supp(u), \{\gamma, \gamma^{u^{-1}} \} \subseteq \supp(x) \}.
\end{eqnarray*}
With $\delta_{-} = \alpha^{u^{-1}}$ and $\delta_{+} = \alpha^u$ we obtain 
$$
{\cal G} \supseteq \{(x,\alpha) \mid x \in E, \delta_{-} \in \supp(x)\}
+ \{(x,\delta_+) \mid x \in E, \delta_{+} \in \supp(x)\}.
$$
Now $\{\delta_+, \delta_-\} \cap \{\alpha,\beta\} = \emptyset$, so that by (2.2), applied to $v$,
$$
|{\cal G}| \ge |\{x \in E \mid  \delta_{-} \in \supp(x)\}|
+ |\{ x \in E \mid \delta_{+} \in \supp(x)\}| = 2 |E| \frac{m-2}{n-2}.
$$
Finally,
$$
m |E| \le |E| \left(3 + (3(m-1) -2)\frac{m-2}{n-2}\right),
$$
i.e.
$$
m \le 3 + (3m-5)\frac{m-2}{n-2},
$$
which is easily seen to be equivalent to
$$
n \le 3m + \frac{4}{m-3}.
$$
If $n \ge 23$, then $m \ge 8$ so that $n \le 3m + 4/5 $, i.e. $m \ge n/3$.
\end{proof}

The following version of Bochert's bound for $t\ge 4$ is slightly weaker than his original result. We follow \cite[pp. 7--11]{Parker-1957}.
\begin{theorem} If $t \ge 4$ then $m \ge 6$ and $n-3 \le 2m$.
\end{theorem}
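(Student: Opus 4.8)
The bound $m \ge 6$ is immediate: by hypothesis $m > 3$, so Theorem 2.2 applies and gives $m \ge 2t-2 \ge 6$. It remains to prove $n - 3 \le 2m$, and here the plan is to run the same commutator double-counting as in the proofs of Theorems 3.1 and 3.2, but to replace the coarse estimates (2.1)/(2.2) by the sharper containment (2.3). It is exactly the appearance of $\fix(u)$ and $\fix(x)$ in (2.3) that should turn the factor $3$ of Theorem 3.2 into a factor $2$ here, at the cost of needing the fourfold transitivity to make the relevant stabilizer act transitively on ordered pairs.

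First I would fix $u \in G$ of minimal degree and, mimicking the construction in Theorem 3.2, produce from $u$ a conjugate $v$ together with a family $E = \{g^{-1} v g \mid g \in G_{(\Delta)}\}$ of its conjugates under the pointwise stabilizer of a small set $\Delta$, chosen so that two things hold simultaneously: (a) $\Delta \subseteq \supp(x)$ and $|\supp(x)| = m$ for every $x \in E$, so that Lemma 2.5 and its counts apply; and (b) $[u,x] \ne 1$ for every $x \in E$, so that $|\supp([u,x])| \ge m$. Point (b) is secured exactly as in Theorem 3.2, by arranging $\alpha^x$ to be a fixed point of $u$, the same for all $x \in E$. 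Writing ${\cal E} = \{(x,\gamma) \mid x \in E, \gamma \in \supp([u,x])\}$, this yields the lower bound $|{\cal E}| \ge |E|\, m$.

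The heart of the argument is the upper bound on $|{\cal E}|$. By (2.3), for each $x$ the set $\supp([u,x])$ lies in the disjoint union of $\Delta_x$, of $A_x = \{\gamma \in \fix(u) \mid \gamma^x \in \Delta_x\}$, and of $B_x = \{\gamma \in \fix(x) \mid \gamma^u \in \Delta_x\}$. The key observation is that $A_x$ and $B_x$ are not genuinely new: substituting $\delta = \gamma^x$ in $A_x$ and $\delta = \gamma^u$ in $B_x$ exhibits each as a set of points $\delta \in \Delta_x$ subject to one extra condition (namely $\delta^{x^{-1}} \in \fix(u)$, respectively $\delta^{u^{-1}} \in \fix(x)$). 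Hence the associated incidence sets ${\cal A}, {\cal B}$ sit inside ${\cal F} = \{(x,\delta) \mid \delta \in \Delta_x\}$, and $|{\cal E}| \le |{\cal F}| + |{\cal A}| + |{\cal B}|$. I would then evaluate all three by Lemma 2.5: $|{\cal F}|$ comes straight from (2.5), while $|{\cal A}|$ and $|{\cal B}|$ reduce, after summing over the free point, to the pair-counts of parts (ii)--(iv) (or to their $|\Delta|$-analogues, which the fourfold transitivity supplies through Lemma 2.4). By the symmetry between $u$ and $x$ one expects $|{\cal A}| = |{\cal B}|$, so that everything collapses to a single inequality $m \le \varphi(n,m)$ with $\varphi$ an explicit rational function.

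Finally I would clear denominators in $m \le \varphi(n,m)$ and simplify; the resulting polynomial inequality is monotone in $n$ and fails once $n$ is too large, and feeding in $m \ge 6$ should pin the threshold down to $n \le 2m + 3$. The main obstacle is the construction in the second step: because the bound is sharp (at $m = 6$ the threshold is exactly $2m + 3$), there is no slack to waste, so one cannot simply estimate away the conjugates that happen to commute with $u$ --- the family $E$ must be built so that every commutator is genuinely nontrivial while still satisfying the constant-fibre hypotheses of Lemma 2.4. Reconciling these two demands, and deriving the handful of $|\Delta|$-specific counts they require, is where the real work lies; the concluding computation, though it must be carried out exactly rather than asymptotically to recover the additive constant $3$, is then routine. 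This is the route taken in \cite[pp.~7--11]{Parker-1957}.
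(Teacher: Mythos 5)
Your outline does follow the same route as the paper (both go back to Parker's dissertation): $m \ge 6$ from Theorem 2.2; a family $E$ of conjugates of a suitable $v$ under $G_{\alpha\beta}$ with $[u,x]\ne 1$ for all $x \in E$; the lower bound $|{\cal E}| \ge |E|\,m$; the decomposition of $\supp([u,x])$ by (2.3) into $\Delta_x$, $A_x$, $B_x$; and the evaluation of the resulting incidence sets by Lemmas 2.4/2.5, with fourfold transitivity supplying the pair counts and the $\fix$-conditions supplying the factors $n-m$ that turn the $3$ of Theorem 3.2 into a $2$. But as a proof the proposal has genuine gaps, and they sit exactly at the points you yourself label ``the real work''. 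The first is the construction. You insist on (a) $\Delta \subseteq \supp(x)$ for all $x \in E$ together with (b) $\alpha^x = \beta \in \fix(u)$, i.e.\ you recycle the construction of Theorem 3.2. The paper does something different: it chooses $h$ with $\alpha^h \in \fix(u)$ and $(\alpha^u)^h \in \Lambda$, so that $v = huh^{-1}$ \emph{fixes} $\alpha$ and moves $\beta = \alpha^u \in \supp(u)$; then $E \subseteq G_\alpha\setminus G_\beta$ forces $[u,x]\ne 1$, your condition (a) is deliberately abandoned ($\alpha \notin \supp(x)$), and the counts are run on $\Omega\setminus\{\alpha\}$ with $\Delta = \{\beta\}$, the $4$-transitivity of $G$ entering as $3$-transitivity of $G_\alpha$. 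This difference is not cosmetic. With your choice, $\beta$ is a point of $\fix(u)$ lying in $\supp(x)$ for \emph{every} $x \in E$, so in bounding ${\cal A}$ the pairs with $\gamma = \beta$, and likewise the pairs with $\gamma^x = \alpha$, are not in general position for $G_{\alpha\beta}$ and are not covered by (2.6)--(2.8); they need separate applications of Lemma 2.4, whose fibres depend on $v$ (for the generic pairs the fibre is $m-3$ or $m-2$ according as $\beta^v \ne \alpha$ or $\beta^v = \alpha$). None of this is fatal, but it is precisely the case analysis you defer, and by your own observation there is no slack in the bound to absorb imprecision there.

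The second gap is the shortcut by which you propose to ``collapse'' the estimate: $|{\cal A}| = |{\cal B}|$ ``by the symmetry between $u$ and $x$'' is not available. There is no such symmetry --- $u$ is a single fixed permutation while $x$ runs over an orbit of $G_{\alpha\beta}$ --- and indeed in the paper the two sets are bounded by genuinely different expressions,
$$
|{\cal G}| \le |E|\,\frac{n-m}{n-2}\left(\frac{(m-2)^2}{n-3}+1\right)
\qquad\text{versus}\qquad
|{\cal H}| \le |E|\left(1+\frac{(n-m)(m-2)^2}{(n-2)(n-3)}\right);
$$
each must be estimated on its own. Finally, the concluding computation is less of a formality than you suggest: the paper substitutes $M=m-3$, $N=n-3$, completes the square, and uses $p(M) < (M^2+7M)^2$ (valid only because $M \ge 3$, i.e.\ $m \ge 6$) to get $N < 2M+6+\frac{3}{2M}$, whence integrality gives $n-3\le 2m$. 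Without the exact constants from the counting --- which you have not produced --- one cannot know that the threshold lands at $2m+3$ rather than slightly below it. So: right strategy, essentially the paper's, but the proof is missing exactly where the theorem is decided.
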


\begin{proof} Theorem 2.2 shows that $m \ge 6$. Let $u \in G$ be such that  $|\supp(u)| =m$, $\alpha \in \supp(u)$, $\beta =\alpha^u$ and $\Lambda =\supp(u) \setminus \{\alpha, \beta\}$. Since $G$ is doubly transitive there is an $h \in G$ such that 
$\alpha^h \in \fix(u)$ and $\beta^h \in \Lambda$.
Then 
$v = h u h^{-1} \in G_\alpha \setminus G_\beta$, $|\supp(v)|=m$ 
and $E = \{ g^{-1}v g \mid g \in G_{\alpha \beta}\}\subseteq G_\alpha \setminus G_\beta$. Hence $[u,x] \ne 1$ for all $x \in E $ and
$
{\cal E} =  \{ (x, \gamma) \mid x \in E, \gamma \in \supp([u,x]) \}
$
has at least $|E| m$ elements.  By (2.3),
$
{\cal E} 
\subseteq  {\cal F} \cup {\cal G} \cup {\cal H}
$
with
\begin{eqnarray*}
{\cal F} & = & \{ (x, \gamma) \mid x\in E, \gamma \in \Delta_x\},\\
{\cal G} & = & \{ (x, \gamma) \mid x\in E, \gamma^u = \gamma, \gamma^x \in \Delta_x\},\\
{\cal H} & = & \{ (x, \gamma) \mid x\in E, \gamma^x = \gamma, \gamma^u \in \Delta_x \}.
\end{eqnarray*}
 As in proof of Theorem 3.2 it is shown that $|{\cal F}|$ is given by (3.2). In order to get an upper bound for $|{\cal G}|$ we note that 
\begin{eqnarray*}
{\cal G} & \subseteq &\{ (x, \gamma) \mid x \in E, \gamma \in \fix(u), \gamma^x \in \supp(u) \setminus \{\alpha\} \}\\
&=& \sum_{\gamma \in \fix(u)} (\{ (x, \gamma) \mid x \in E,  \gamma^x \in \Lambda\} +  \{ (x, \gamma) \mid x \in E, \gamma^x = \beta \}).
\end{eqnarray*}
Now, applying (2.8) to $\Omega \setminus \{\alpha\}$ and $\Delta = \{\beta\}$, we get
\begin{eqnarray*}
|\{ (x, \gamma) \mid x \in E, \gamma^x \in \Lambda\}| & \le &
 \sum_{\delta \in \Lambda} |\{x \in E \mid \gamma^x = \delta\}| \\& = &
 |\Lambda| \frac{|E|(m-2)}{(n-2)(n-3)},
\end{eqnarray*} 
and, by (2.7),
$$
|\{ (x, \gamma) \mid x \in E, \gamma^x = \beta\}| = \frac{|E|}{n-2}.
$$
Since $|\fix(u)| = n-m$,
$$
|{\cal G}| \le |E| \frac{n-m}{n-2} \left( \frac{(m-2)^2}{n-3}+ 1\right).
$$
Finally,
\begin{eqnarray*}
{\cal H} & = & \{ (x, \gamma) \mid x\in E, \gamma^x = \gamma, \gamma \in \supp(u),
\gamma^u \in \supp(x)\}\\
& = & E \times \{\alpha\} +\ \sum_{\gamma \in \Lambda}  \{ (x, \gamma) \mid x\in E, \gamma \in \fix(x), \
\gamma^u \in \supp(x)\}.
\end{eqnarray*}
By assumption, $G$ is $4$-transitive, hence, using (2.6), we have
\begin{eqnarray*}
|{\cal H}|  & \le &
 |E| + \sum_{\gamma \in \Lambda} |\{x \in E \mid  \gamma \in \fix(x), \gamma^u \in \supp(x)\}| \\
 & \le & |E| \left(1 + |\Lambda|  \frac{(n-m)(m-2)}{(n-2)(n- 3)}\right)\\
  & = & |E| \left(1 +  \frac{(n-m)(m-2)^2}{(n-2)(n- 3)}\right).
\end{eqnarray*} 
Therefore
\begin{eqnarray*}
m  & \le &  \left(1 + \frac{(m-1)(m-2)}{n-2}  \right) +  \left( \frac{(n-m)(m-2)^2}{(n-2)(n-3)}+ \frac{n-m}{n-2}\right) \\
    & & + \left(1 +  \frac{(n-m)(m-2)^2}{(n-2)(n- 3)} \right)
\end{eqnarray*} 
which is equvalent to
$$
m-3    \le  \frac{(m-2)^2}{n-2} + 2\frac{(n-m)(m-2)^2}{(n-2)(n-3)},
$$
i.e., if we put $M = m-3$ and $N = n-3$, 
$$
M   \le  \frac{(M+1)^2}{N+1} + 2\frac{(N-M)(M+1)^2}{N(N+1)},
$$
or, after rearrangement,
$$
N(N+1)  \le 3 \frac{(M+1)^2}{M} N- 2(M+1)^2.
$$
Putting $N_0 = (3(M+1)^2 -M)/2M$ and completing the square, we obtain
$$
 \left(N - N_0\right)^2  \le   N_0^2 -  2 (M+1)^2
  =  \frac{p(M)}{4M^2},
$$
so that
\begin{equation}
N \le N_0 + \frac{\sqrt{p(M)}}{2M},
\end{equation}
where $p(x) = x^4+ 14x^3 + 35 x^2 + 30x +9$. Since $M = m-3 \ge 3$,   $p(M) < (M^2 +7M)^2$ and, by (3.3),
$$
N < N_0 + \frac{M^2+ 7M}{2M} = 2M +6+ \frac{3}{2 M}.
$$
This implies $n - 3 \le 2m$.
\end{proof}

\begin{remark} {\rm (i) Bochert's original result states that $m \ge (n/2) -1$ for $t \ge 4$. The above proof can be amended to show that this bound in fact holds for all $n \ge 27$. (ii) By the classification of finite simple groups, the Mathieu groups $M_{11}$, $M_{12}$, $M_{23}$ and $M_{24}$ are the only non-trivial $4$-transitive groups ($M_{12}$ and $M_{24}$ being actually $5$-transitive) \cite[p. 110]{cameron-1999}. The minimal degree of $M_{11}$ and $M_{12}$ is $8$, that of $M_{23}$ and $M_{24}$ $16$. The respective lower bounds provided by Theorem 3.3 are $6$, $6$, $10$ and $11$.}

\end{remark}

\section*{Acknowledgment} The author thanks Cheryl E. Praeger for her helpful comments.

\bibliographystyle{plain}

\end{document}